\documentclass[11pt,a4paper]{amsart}
\usepackage{color}
\usepackage[T1]{fontenc}
\usepackage[utf8]{inputenc}
\usepackage{hyperref,amssymb,url,upref,verbatim,xspace,mathrsfs}
\usepackage{amsthm,amssymb,amsmath,latexsym,bm,stmaryrd}
\usepackage{spalign}
\RequirePackage[all,ps,cmtip]{xy}
\newdir^{ (}{{}*!/-5pt/\dir^{(}}
\newdir_{ (}{{}*!/-5pt/\dir_{(}}
\RequirePackage[mathscr]{eucal}
\usepackage{mathrsfs}\let\mathcal\mathscr
\theoremstyle{plain}

\newtheorem*{theorem*}{Main Theorem}
\newtheorem{theorem}{Theorem}
\newtheorem{proposition}[theorem]{Proposition}
\newtheorem{lemma}[theorem]{Lemma}
\newtheorem{corollary}[theorem]{Corollary}

\theoremstyle{definition}

\AtBeginDocument{%
}

\newcounter{toto}
\def\thetoto{\arabic{toto}}
\let\oldmarginpar\marginpar
\def\marginpar#1{\refstepcounter{toto}\textsuperscript{\textup{[\thetoto]}}\oldmarginpar{\footnotesize\textsuperscript{[\thetoto]}\,#1}}

\def\mainmatter{\renewcommand{\baselinestretch}{1.1}\normalfont}

\makeatletter
\@namedef{subjclassname@2010}{\textup{2010} Mathematics Subject Classification}
\def\l@section{\@tocline{1}{0pt}{0pc}{}{}}
\def\l@subsection{\@tocline{2}{0pt}{1.5pc}{}{}}
\def\l@subsubsection{\@tocline{3}{0pt}{2pc}{}{}}
\makeatother
\makeatletter
\@namedef{subjclassname@2020}{%
   \textup{2020} Mathematics Subject Classification}
\makeatother

\newcommand{\C}{\mathbb{C}}\let\CC\C

\newcommand{\R}{\mathbb{R}}

\newcommand{\Z}{\mathbb{Z}}

\newcommand{\bD}{\boldsymbol{D}}
\newcommand{\shhom}{\mathcal{H}\!\mathit{om}}

\DeclareMathOperator{\rh}{\mathit{R}\shhom}\let\Rhom\rh
\DeclareMathOperator{\tho}{\mathit{T}\shhom}

\DeclareMathOperator{\Rh}{\mathrm{RHom}}
\DeclareMathOperator{\RH}{RH}

\newcommand{\rb}{\mathrm{b}}

\newcommand{\coh}{\mathrm{coh}}
\newcommand{\hol}{\mathrm{hol}}
\newcommand{\rhol}{\mathrm{rhol}}

\newcommand{\Mod}{\mathrm{Mod}}

\newcommand{\op}{\mathrm{op}}

\newcommand{\sa}{\mathrm{sa}}

\newcommand{\Rc}{{\R\textup{-c}}}

\newcommand{\XS}{X\times S}

\newcommand{\XpS}{X'\times S}

\newcommand{\DXS}{\shd_{\XS/S}}

\newcommand{\DXpS}{\shd_{\XpS/S}}

\DeclareMathOperator{\Char}{Char}

\DeclareMathOperator{\rD}{\mathsf{D}}

\DeclareMathOperator{\Db}{\mathscr{D}b}

\let\Id\id

\DeclareMathOperator{\supp}{Supp}

\let\bar\overline

\let\epsilon\varepsilon

\let\setminus\smallsetminus
\let\leq\leqslant

\def\loccit{loc.\kern3pt cit.{}\xspace}
\def\cf{cf.\kern.3em}
\def\Cf{Cf.\kern.3em}
\def\eg{e.g.\kern.3em}

\def\resp{\text{resp.}\kern.3em}

\newcommand{\Df}{{}_{\scriptscriptstyle\mathrm{D}}f}

\newcommand{\Di}{{}_{\scriptscriptstyle\mathrm{D}}i}

\newcommand{\cbbullet}{{\raisebox{1pt}{$\sbullet$}}}
\newcommand{\sbullet}{{\scriptscriptstyle\bullet}}
\newcommand{\pOS}{p^{-1}\sho_S}

\def\shd{\mathcal{D}}

\let\cF F
\let\cG G
\def\shh{\mathcal{H}}

\def\shj{\mathcal{J}}

\def\shm{\mathcal{M}}

\def\sho{\mathcal{O}}
\def\shr{\mathcal{R}}

\newcommand{\RedefinitSymbole}[1]{%
\expandafter\let\csname old\string#1\endcsname=#1
\let#1=\relax
\newcommand{#1}{\csname old\string#1\endcsname\,}%
}
\RedefinitSymbole{\forall} \RedefinitSymbole{\exists}

\def\to{\mathchoice{\longrightarrow}{\rightarrow}{\rightarrow}{\rightarrow}}

\def\To#1{\mathchoice{\xrightarrow{\textstyle\kern4pt#1\kern3pt}}{\stackrel{#1}{\longrightarrow}}{}{}}

\let\oldbigoplus\bigoplus
\renewcommand{\bigoplus}{\mathop{\textstyle\oldbigoplus}\displaylimits}
\let\oldbigwedge\bigwedge
\renewcommand{\bigwedge}{\mathop{\textstyle\oldbigwedge}\displaylimits}
\let\oldbigcap\bigcap
\renewcommand{\bigcap}{\mathop{\textstyle\oldbigcap}\displaylimits}
\let\oldbigcup\bigcup
\renewcommand{\bigcup}{\mathop{\textstyle\oldbigcup}\displaylimits}

\DeclareMathOperator{\Supp}{Supp}

\begin{document}

\author[T. Monteiro Fernandes]{Teresa Monteiro Fernandes}
\address[T. Monteiro Fernandes]{Centro de Estudos Matem\'aticos and Departamento de Matem\' atica da Faculdade de Ci\^en\-cias da Universidade de Lisboa, Bloco C6, Piso 2, Campo Grande, 1749-016, Lisboa
Portugal}
\email{mtfernandes@ciencias.ulisboa.pt}
\thanks{The research of T. Monteiro Fernandes was supported by
CEMS.UL-Center for Mathematical Studies, Ciências, ULisboa
UID/04561/2025-https://doi.org/10.54499/UID/04561/2025}
\title{On the solutions of relative regular holonomic $\shd$-modules}

\date{}

\keywords{Relative $\mathcal D$-module, regular holonomic $\mathcal D$-module, relative constructible sheaf}

\subjclass[2020]{14F10, 32C38, 35A27, 58J15}

\begin{abstract}
Let $S$ be a complex curve and let $X$ be a complex manifold. Given a relative regular holonomic $\DXS$-module $\shm$ with respect to the projection $X\times S\to S$, we construct an isomorphism between its solutions respectively on the (relative subanalytic) sheaf of relative tempered holomorphic functions and on the sheaf of holomorphic solutions on $X\times S$.  
\end{abstract}

\maketitle
\tableofcontents
\mainmatter

\vspace*{-2\baselineskip}\vskip0pt%
\section{Introduction}
The relative framework we deal with in this note is associated to a projection $$p: X\times S\to S$$ where $X$ and $S$ are complex manifolds. 
Throughout this work we identify the relative cotangent bundle $T^*(X\times S/S)$ to $T^*X\times S$ and $d_X$ and $d_S$ will denote respectively the complex dimension of $X$ and of $S$. We denote by $p_X$ the projection $X\times S\to X$. 

Let $\DXS$ be the subsheaf of $\shd_{\XS}$ of operators commuting with $p^{-1}\sho_S$ and let $\Mod_{\coh}(\DXS)$ be the abelian category of coherent $\DXS$-modules.
A $\DXS$ -holonomic module is a coherent $\DXS$-module whose characteristic variety is contained in a product $\Lambda\times S$ where $\Lambda$ is $\C^*$-conic analytic Lagrangian in $T^*X$ (see \cite{MFCS1} for instance). The datum of a strict (i.e, a $p^{-1}\sho_S$-flat) holonomic $\DXS$-module is equivalent to the datum of a flat family of holonomic $\shd_X$-modules with characteristic variety contained in $\Lambda$. 
We say that a holonomic $\DXS$-module $\shm$ is regular holonomic if that is so for the restrictions of $\shm$ to each fiber of $p$ regarded as holonomic $\shd_X$-modules. 

 Let us give a brief historical motivation for the present work:
 
  M. Kashiwara and P. Schapira  in \cite{KS6} and \cite{KS7} introduced and studied the  subanalytic sheaf $\sho^t_X$ of tempered holomorphic functions and proved that, if $\shm$ is a regular holonomic $\shd_X$-module, then (with some abuse of notation),  the natural morphism 
 \begin{equation}\label{E0}
 \Rhom_{\shd_X}(\shm, \sho^t_X)\to \Rhom_{\shd_X}(\shm, \sho_X)
 \end{equation}
is an isomorphism. Note that $\sho^t_X$ is a complex (it is an ind-sheaf) and it encodes a wide family of sheaves. Examples are $\sho^{t, U}_X:=\Rhom(\C_U, \sho^t_X)$ the sheaf of holomorphic functions on $U$ which extend as distributions to $X$, where $U\subset X$ is a relatively compact Stein open subanalytic set. Another instance is the sheaf of distributions $\Db_M$ on a real analytic manifold $M$, if $X$ is a complexification of $M$, which is isomorphic, up to a shift, to $\Rhom(\bD'_X(\C_M),\sho^t_X)$. More generally, if $F$ is an $\R$-constructible complex on $X$, one can define a subanalytic sheaf $\Rhom(F, \sho^t_X)$ and, if one restricts $F$ to $\C$-constructible objects (i.e bounded complexes with $\C$-constructible cohomology), 
$\Rhom(F, \sho^t_X)$ is the essential tool for the Riemann-Hilbert-Kashiwara reconstruction functor.
The isomorphism \eqref{E0} is a consequence of Kashiwara's result \cite[Cor. 8.6]{Ka3} which asserts that, for any regular holonomic $\shd_X$-module $\shm$ and  for any $\R$-constructible complex $F$ on $X$, denoting by $\RH$ Kahsiwara's Riemann-Hilbert functor (\cf \cite{Ka3}, where $\RH$ is denoted by $\Psi$), the natural morphism 
\begin{equation}\label{Intro}\Rhom_{\shd_X}(\shm, \RH(F))\to \Rhom_{\shd_X}(\shm, \Rhom(F, \sho_{X}))
\end{equation} is an isomorphism.

Inspired by the study of subanalytic sheaves  developed by Kashiwara and Schapira, the relative subanalytic sheaf of relative tempered holomorphic functions $\sho^{t,S}_{\XS}$ was introduced in \cite{MFP1} and \cite{MFP2}. For instance, if $F$ is a relative strict perverse object, $\Rhom_{p^{-1}\sho_S}(F,\sho^{t,S}_{\XS})$ encodes the notion of holomorphic family of regular holonomic $\shd_X$-modules whose family of solutions complexes is $F$. 

We recall that $\sho^{t,S}_{\XS}$ is one of the key tools for the relative Regular Riemann-Hilbert correspondence  proved in \cite{FMFS1} (the case $d_S=1$) and \cite{FMFS2} (the general case) since it allows to construct the relative reconstruction Riemann-Hilbert functor $\RH^S$.

The natural next step was then to prove that, if $\shm$ is a relative regular holonomic complex, for any $F\in\rD^\rb_{\Rc}(\pOS)$ the natural morphism
\begin{equation}\label{E01}
 \Rhom_{\DXS}(\shm, \RH^S(F))\to \Rhom_{\DXS}(\shm, \Rhom(F, \sho_{X\times S})[d_X])
 \end{equation}
is an isomorphism. We prove it when $d_S=1$ which is our main result Theorem \ref{Thmain}. The main idea of the proof is to reduce to the case where $\shm$ is a single module and to combine induction on the dimension of $\Supp_X\shm:=p_X(\Supp\shm)$ with treating in separate the $\sho_S$-torsion case and the $\sho_S$-torsion free case. The latter, because $d_S=1$, amounts to assume that $\shm$ is a coherent locally free $\sho_{X\times S}$-module.

It is of course natural to ask why not consider, as a next step, a general $d_S$. And the reason is that, unlike the case of $S$-$\C$-constructibility treated in \cite{FMFS1}, \cite{FMFS2}, when $d_S>1$, $S$-$\R$ constructibility offers further (and not overcome) difficulty due to the lack of functorial properties of the functor $\RH^S$ when defined in the derived category of complexes with $S$-$\R$-constructible cohomology, namely with respect to base  pull-back and projective base push-forward. As an instance of this we refer \cite [Th.1.3]{FMF2}.

Such obstruction prevented us from following the strategy of the proof of Theorem 2 in \cite{FMFS2} in order to obtain the statement of Theorem \ref{Thmain} for a parameter manifold $S$ of arbitrary dimension. However the question remains open up to discover a new strategy.

We thank Luisa Fiorot for the patient revision of a previous version.

\section{A short reminder on the relative Riemann-Hilbert correspondence}

Below we summarize the background from \cite{MFCS1}, \cite{MFCS2}, \cite{FMFS1}, \cite{FMFS2} that we shall need in the sequel. We shall keep the notation $p$ for the projection in the parameter space $X\times S\to S$ avoiding reference to $X$ or to $S$.

\subsection{Holonomic and regular holonomic  $\DXS$-modules}
\begin{enumerate}
\item{We say that a $\pOS$-module is strict if it is flat over $\pOS$.}
\item{We recall that $\shm\in\Mod_{\coh}(\shd_{X\times S/S})$ is holonomic if the characteristic variety $\Char(\shm)$ is contained in $\Lambda\times S$, where $\Lambda$ is analytic $\C^*$-conic Lagrangian subset of $T^*X$; we denote by $\rD^{\rb}_{\hol}(\DXS)$ the associated triangulated category whose objects are the bounded complexes with holonomic cohomologies.}

\item{There is a well defined duality functor $$\bD: \rD^{\rb}_{\hol}(\DXS)\to \rD^{\rb}_{\hol}(\DXS)^\op$$ given by
$$\bD \shm:=\Rhom_{\DXS}(\shm, \DXS\otimes _{\pOS}\Omega_{\XS/S}^{\otimes^{-1}})[d_X])$$ where $\Omega_{\XS/S}$ denotes the sheaf of relative differential forms of maximal degree.}
\item{$\bD$ is an involution, i.e. $\bD\bD=\Id$.}
\item{We recall a tool introduced in \cite{MFCS1}, the holomorphic restriction to each fiber of $p$:  $$\forall s\in S,\, Li^*_s(\cbbullet):=\cbbullet\overset{L}{\otimes}_{p^{-1}\sho_S}p^{-1}(\sho_S/\shj_s)$$ where $\shj_s$ is the maximal ideal of functions vanishing in $s$.}
\item{ A Nakayama's Lemma variation: Let $\shm\in\rD^\rb_\hol(\DXS)$ and assume that $L i^*_{s_o}\shm=0$ for each $s_o\in S$. Then $\shm=0$.}
\item{Let $\shm$ be an object of $\rD^{\rb}_{\hol}(\DXS)$. Then $\bD \shm$ is concentrated in degree zero and $\shh^0\bD \shm$ is strict if and only if $\shm$ is itself concentrated in degree zero and $\shh^0\shm$ is a strict $\DXS$-module.}
\item{We say that $\shm\in\Mod(\shd_{X\times S/S})$ is regular holonomic if it is holonomic and $\forall s\in S, Li^*_s\shm\in\rD^{\rb}_{\rhol}(\shd_X)$; we denote by $\rD^{\rb}_{\rhol}(\DXS)$ the associated triangulated full subcategory of $\rD^\rb_{\hol}(\DXS)$.}
\item{$\rD^{\rb}_{\rhol}(\DXS)$ is stable by duality.}
\item{$\Mod_{\hol}(\DXS)$ and $\Mod_{\rhol}(\DXS)$ are closed under taking extensions in $\Mod(\DXS)$ and subquotients in $\Mod_{\coh}(\DXS)$.}
\end{enumerate}

\subsection{$S$-$\R$-constructibility}

\vspace{2mm}
Let $X$ be a real analytic manifold.
An $S$-locally constant coherent sheaf is a sheaf $L$ of $p^{-1}\sho_S$-modules such that, locally on $X\times S$, $L$ is isomorphic to $p^{-1}G$ where $G$ is an $\sho_S$-coherent module. Such an $L$ is also called $S$-local system. We recall the following full triangulated subcategory of $\rD^\rb(p^{-1}\sho_S)$.

\begin{itemize}
\item{An object $F\in\rD^\rb(p^{-1}\sho_S)$ is an object of $\rD^\rb_\Rc(\pOS)$ if there exists a subanalytic stratification $(X_{\alpha})_{\alpha\in A}$ of $X$, such that $\forall j\in\Z, \forall \alpha\in A, \shh^jF|_{X_{\alpha}\times S}$ is $S$-locally constant coherent. We also say that $F$ is $S$-$\R$-constructible.} 
\item{If $F\in\rD^\rb_\Rc(\pOS)$ then for each $x\in X$, $F|_{\{x\}\times S}$ belongs to $\rD^\rb_{\coh}(\sho_S)$.}
\item{There is a natural duality functor $\bD: \rD^\rb_\Rc(\pOS)\to \rD^\rb_\Rc(\pOS)^\op$ which is an involution given by
$$\bD F=\Rhom_{\pOS}(F,\pOS)[2 d_X]$$}
\end{itemize}
The following result will not be directly used in this note. We choose to include it for completeness.
\begin{proposition}\label{P1}
Let $F$ be an object of $\rD^\rb_{\Rc}(\pOS)$. Then, the support of $F$ satisfies: 
\begin{equation}\label{E002}
\supp F=\bigcup\limits_{i\in I}X_i\times S_i
\end{equation}
for some compact subanalytic contractible subsets $X_i$ of $X$ and some closed analytic subsets $S_i$ of $S$, and, locally on $X$, the set $I$ is finite.
Moreover, the sets $S_i$ are independent of the family $(X_i)_{i\in I}$.
\end{proposition}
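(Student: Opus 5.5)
We argue locally on $X$ and reduce the description of $\supp F$ to a statement about each cohomology sheaf restricted to a single stratum. Since $\supp F=\bigcup_j \supp \shh^jF$ and $F$ is bounded, it is enough to treat one $\shh^jF$; we may therefore assume $F=\shh^jF$ is a sheaf. Choose a subanalytic stratification $(X_\alpha)_{\alpha\in A}$ adapted to $F$ as in the definition of $\rD^\rb_\Rc(\pOS)$. Refining it if necessary (subanalytic triangulation), we may assume that the strata are locally finite and that each $X_\alpha$ is the open simplex of a subanalytic triangulation. In particular, each closure $\overline{X_\alpha}$ is compact and contractible, and so is every compact subanalytic contractible piece we take inside $X_\alpha$. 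Then
\[
\supp F=\bigcup_\alpha \supp\bigl(F|_{X_\alpha\times S}\bigr),
\]
and on $X_\alpha\times S$ the sheaf $F$ is $S$-locally constant coherent.

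The key step is to show that for an $S$-locally constant coherent sheaf $L$ on $X_\alpha\times S$, with $X_\alpha$ connected and contractible, one has $\supp L=X_\alpha\times S_\alpha$, where $S_\alpha$ is a closed analytic subset of $S$.

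Locally, $L\simeq p^{-1}G$ with $G$ coherent, so $\supp L$ is locally of the form $U\times\supp G$, and $\supp G$ is closed analytic. Now fix $s\in S$. For $x\in X_\alpha$, the condition that the stalk $L_{(x,s)}$ vanishes is both open and closed in $x$, since locally the stalk equals $G_s$ independently of $x$. As $X_\alpha$ is connected, the set $\{s : L_{(x,s)}\neq0\}$ does not depend on $x$. Call this set $S_\alpha$. It is locally equal to $\supp G$, so it is closed analytic. (Contractibility even gives $L\simeq p^{-1}G_\alpha$ globally, but connectedness suffices for this step.)

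Next we pass to closures. We have $\supp F\cap (X_\alpha\times S)=X_\alpha\times S_\alpha$, and $\supp F$ is closed. Write $X_\alpha$ as an increasing locally finite union of compact subanalytic contractible sets (closed sub-simplices). Together with the closed faces coming from lower strata, this expresses $\supp F$ as a locally finite union $\bigcup_i X_i\times S_i$ of the required form. Alternatively, $\overline{X_\alpha}\times S_\alpha\subset\supp F$ because the support is closed, so one can simply use the closed simplices $\overline{X_\alpha}$. Local finiteness of $I$ follows from the local finiteness of the stratification.

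For the independence statement, the $S_i$ are determined intrinsically. For each $x$, consider the set $\{s : (x,s)\in\supp F\}$; it is a finite union of the $S_i$ with $x\in X_i$. The $S_i$ occurring are therefore recovered as the fibres $\supp F\cap(\{x\}\times S)$ for $x$ generic in a stratum. By the previous point, and by the fact that $F|_{\{x\}\times S}\in\rD^\rb_\coh(\sho_S)$, these fibres are analytic and do not depend on the choice of the $X_i$.

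The main obstacle I expect is the closure step: making sure that taking closures of strata does not enlarge $S_\alpha$ incorrectly, and choosing the $X_i$ compact and contractible while keeping the union locally finite. I would handle both points with a subanalytic triangulation compatible with the stratification.
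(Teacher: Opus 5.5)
Your argument is correct in substance, but it follows a different route from the paper. The paper goes straight to a locally finite covering of $X$ by compact contractible subanalytic $X_i$ with $F|_{X_i\times S}\simeq p_{X_i}^{-1}G_i$. It identifies $G_i\simeq F|_{\{x\}\times S}$ for $x\in X_i$ and reads off $\supp(F|_{X_i\times S})=X_i\times\supp G_i$; the independence claim is then just the uniqueness of $G_i$.

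You use $S$-local constancy only on the open simplices of a triangulation adapted to $F$. A connectedness argument shows that the nonzero stalks over $X_\alpha$ form $X_\alpha\times S_\alpha$. You then pass to closed simplices, using that the support is closed and the triangulation is locally finite.

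Your route is actually the more careful one. The paper's step of having $F$ constant on each compact piece of a locally finite covering is not available in general. Take $F=(p^{-1}\sho_S)_{(0,\infty)\times S}$ on $\R\times S$: no connected $X_i$ containing $0$ and a positive number can have constant restriction, since the stalks over $\{0\}\times S$ vanish and those over $(x,s)$ with $x>0$ do not. And compact subsets of $(0,\infty)$ cannot cover it locally finitely near $0$. So the closure step you flagged as the main obstacle is exactly what makes the formula hold.

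A few points need tightening.

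\begin{itemize}
\item The equalities $\supp F=\bigcup_\alpha\supp(F|_{X_\alpha\times S})$ and $\supp F\cap(X_\alpha\times S)=X_\alpha\times S_\alpha$ hold for the set of points with nonzero stalk, not for the closed support. For $F=(p^{-1}\sho_S)_{(0,1)\times S}$, the vertex $\{0\}$ has $S_{\{0\}}=\emptyset$, yet $\{0\}\times S\subset\supp F$. The clean statement is that $\supp F$ is the closure of $\bigcup_\alpha X_\alpha\times S_\alpha$. By local finiteness this closure equals $\bigcup_\alpha\overline{X_\alpha}\times S_\alpha$, which gives both inclusions at once.
\item Drop your first alternative. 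Exhausting an open simplex by compact sub-simplices is not locally finite on $X$ near the boundary of $X_\alpha$.
\item For the same reason, $S_\alpha$ is not the fibre $\supp F\cap(\{x\}\times S)$ for $x\in X_\alpha$; that fibre is $\bigcup_{\beta:\,X_\alpha\subset\overline{X_\beta}}S_\beta$ and can be strictly larger. The intrinsic description is $S_\alpha=\supp(F|_{\{x\}\times S})$ for any $x$ in the open stratum $X_\alpha$. This is what your key step proves and what the paper uses. It is analytic because $F|_{\{x\}\times S}\in\rD^\rb_{\coh}(\sho_S)$.
\end{itemize}
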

\begin{proof}

By the assumption of $S$-$\R$-constructibility and the triangulation theorem (\cite[Proposition 8.2.5]{KS1}), we can consider a locally finite covering of $X$, $X=\cup_{i\in I} X_i$, where each $X_i$ is compact, subanalytic and contractible, and such that $F|_{X_i\times S}$ is isomorphic to $p^{-1}_{X_i}G_i$ where each $G_i$ is an object of 
$\rD^\rb_{\coh}(\sho_S)$. More precisely, given $x\in X$ and any such $X_i$ such that $x\in X_i$, $G_i$ is isomorphic to $F|_{\{x\}\times S}$ thus each $G_i$
 is unique up to isomorphism. Thus, for each $i\in I$
 $$\supp(F|_{X_i\times S})=X_i\times S_i$$ where $S_i=\supp G_i$ (which of course may be empty) thus $S_i$ is closed analytic in $S$.
 
We finally have $$\supp F=\bigcup\limits_{i\in I}\supp F|_{X_i\times S}=\bigcup\limits_{i\in I}X_i\times S_i$$
\end{proof}

We denote by $\supp_SF$ the set $\bigcup\limits_{i\in I}S_i$ which can fail to be analytic if $I$ is not finite. However we can define like in \cite[(2,7)]{FMFS2} $\dim \supp_SF:=\sup \dim S_i\leq d_S$.

\subsection{The functor $\RH^S$}With the subanalytic tools developed in \cite{MFP1}, \cite{MFP2}, the relative functor $\RH^S$ was first introduced in \cite{MFCS2}, followed by \cite{FMFS1} (case $d_S=1$) and by  \cite{FMFS2} (general case). Kashiwara's functor $\RH$ (\cf \cite{Ka3}) is recovered with $d_S=0$. 
 Below we give a short reminder of its construction and main results:

 Let $\rho_S: X\times S\to X_{sa}\times S$ be the natural morphism of sites introduced in \cite{MFP2}. The functor $\rho_S^{-1}$ admits a left adjoint $\rho_{S!}$ which is exact. 
 
 For $\shr=\DXS, \sho_{\XS}, \pOS$, $\rho_{S!}$ is an exact functor $\Mod(\shr)\to\Mod(\rho_{S!}\shr)$ which is a left adjoint of $\rho_S^{-1}: \Mod(\rho_{S!}\shr)\to \Mod(\shr)$.
 
 We note $\sho_{X\times S}^{t, S}$ the relative subanalytic sheaf on $X_{sa}\times S$ associated in \cite{MFP2} to the subanalytic sheaf $\sho^t_{X\times S}$ on $(X\times S)_{sa}$ (introduced in \cite{KS6}, see also \cite{LP08}).
 
  Then there exists by adjunction a natural morphism in $\rD^b(\rho_{S!}\DXS)$
 \begin{equation}\label{E:t} 
 \sho^{t,S}_{\XS}\to R\rho_{S*}\sho_{\XS}
\end{equation}
 
 The functor $\RH^S$ on $\rD^b(\pOS)^\op$ is given by
 
 $$\RH^S(\cbbullet):=\rho_S^{-1}\Rhom_{\rho_{S*}\pOS}(R\rho_{S*}(\cbbullet), \sho^{t,S}_{X\times S})[d_X]$$
 
\subsection{Functorial properties}

When $f:Y\to X$ is a morphism we keep the notion $f$ for the morphism $f\times \Id_S$ and if $\pi:S'\to S$ is a morphism of the parameter spaces, we also keep the notation $\pi$ for the morphism $\Id_X\times \pi$.

We first recall the following result proved in \cite[Th. 5.11]{FMFS2}: 
\begin{theorem}\label{Theorem 5.11} Let $f : X\to Y$ be a morphism of complex analytic manifolds, let $F\in\rD^\rb_{\Rc}(\pOS)$
and assume that f is proper on $\Supp F$. Then there is a canonical isomorphism in
$\rD^\rb(\DXS)$ which is compatible in a natural way with the composition of morphisms
$$\Df_* \RH^S(F)
\simeq \RH^S(R f_*F ).$$ 
\end{theorem}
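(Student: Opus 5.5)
The statement is Theorem~\ref{Theorem 5.11}, quoted from \cite[Th.~5.11]{FMFS2}. To prove it I would follow Kashiwara's argument for the absolute functor $\RH$ in \cite{Ka3}, carried out on the relative subanalytic site $X_{sa}\times S$. The target is a chain of canonical isomorphisms ending in $\RH^S(Rf_*F)$.

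\textbf{Step 1 (relative tempered direct image).} The first ingredient is the relative analogue of the tempered direct image formula. For $f$ proper on the support, there should be a natural isomorphism
\[
Rf_{!!}\bigl(\shd_{Y\leftarrow X/S}\overset{L}{\otimes}_{\DXS}\sho^{t,S}_{\XS}\bigr)[d_X]\simeq \sho^{t,S}_{Y\times S}[d_Y]
\]
in $\rD^\rb(\rho_{S!}\shd_{Y\times S/S})$, possibly only after applying $\Rhom(R\rho_{S*}G,\cbbullet)$ for $S$-$\R$-constructible $G$. I would derive it from the absolute statement for $\sho^t_{X\times S}$ of \cite{KS6} applied to $f\times\Id_S$. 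The passage from $\sho^t_{X\times S}$ to $\sho^{t,S}_{X\times S}$ would use the description of $\sho^{t,S}$ in \cite{MFP2} as the Dolbeault complex of relative tempered distributions. Concretely, one checks the formula on sections over $U\times V$, with $U$ subanalytic relatively compact and $V\subset S$ open, by integration along the fibres of $f$ of tempered distributions.

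\textbf{Step 2 (assembling $\Df_*\RH^S(F)$).} Next I would write out
\[
\Df_*\RH^S(F)=Rf_*\bigl(\shd_{Y\leftarrow X/S}\overset{L}{\otimes}\rho_S^{-1}\Rhom(R\rho_{S*}F,\sho^{t,S}_{\XS})\bigr)[d_X].
\]
Several exchanges are then needed. Because $F$ is $S$-$\R$-constructible, $\rho_S^{-1}$ commutes with $\Rhom(R\rho_{S*}F,\cbbullet)$ and with the tensor product by the coherent transfer module. Properness of $f$ on $\Supp F$ gives $Rf_*=Rf_!$ there. These reduce the expression to
\[
\rho_S^{-1}Rf_*\Rhom\bigl(R\rho_{S*}F,\shd_{Y\leftarrow X/S}\overset{L}{\otimes}\sho^{t,S}_{\XS}\bigr)[d_X].
\]
Finally, Verdier-type adjunction on subanalytic sites, $Rf_*\Rhom(G,H)\simeq\Rhom(Rf_{!!}G,\,\cdot\,)$ in its appropriate form, converts this to $\Rhom(R\rho_{S*}Rf_*F,\,Rf_{!!}(\cdots))$. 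Combining with Step 1 gives $\RH^S(Rf_*F)$.

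\textbf{Step 3 (compatibility with composition).} This follows because every isomorphism used is natural and satisfies the usual cocycle condition. That is the standard compatibility of integration along fibres and of adjunctions with composition.

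I expect the main obstacle to be Step 1 together with the commutation of $\rho_S^{-1}$ with $\Rhom(R\rho_{S*}F,\cdot)$ for $F$ merely $S$-$\R$-constructible. This is exactly where $\sho_S$-coherence of the stalks along $\{x\}\times S$ must be used. My first attempt would be a dévissage: use a stratification, as in the proof of Proposition~\ref{P1}, to reduce to $F=\C_{Z}\boxtimes G$ with $G\in\rD^\rb_\coh(\sho_S)$ and $Z$ a closed subanalytic contractible set. Then resolve $G$ locally by free $\sho_S$-modules of finite rank, which reduces everything to the case $G=\sho_S$. In that case the statement follows from Kashiwara–Schapira's absolute results with $S$ treated as a holomorphic parameter.
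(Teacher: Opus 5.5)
The paper does not prove this statement; it quotes it from \cite[Th.~5.11]{FMFS2}. Your sketch therefore has to stand on its own, and Steps~1--2 have a genuine gap: the key links you call isomorphisms are only morphisms.
\begin{itemize}
\item The formula of Step~1 is false as an isomorphism. What exists is the relative integration morphism $Rf_{!!}(\shd_{Y\leftarrow X/S}\overset{L}{\otimes}_{\DXS}\sho^{t,S}_{\XS})[d_X]\to\sho^{t,S}_{Y\times S}[d_Y]$. Already for $Y$ a point and $X$ compact, its source is, up to shift, $R\Gamma(X;\C)\otimes\sho_S$ (relative Poincar\'e lemma), not $\sho_S$.
\item The ``Verdier-type adjunction'' is not an adjunction. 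The true one is $Rf_*\Rhom(G,f^!H)\simeq\Rhom(Rf_{!!}G,H)$. The map you use, $Rf_*\Rhom(G,K)\to\Rhom(Rf_*G,Rf_*K)$, is mere functoriality and is not invertible: take $X\to\mathrm{pt}$ with $X$ compact and $G=K=\C_X$.
\item ``$\rho_S^{-1}$ commutes with $\Rhom(R\rho_{S*}F,\cdot)$'' is false as written. Since $\rho_S^{-1}\sho^{t,S}_{\XS}\simeq\sho_{\XS}$, it would give $\RH^S(F)[-d_X]\simeq\Rhom_{\pOS}(F,\sho_{\XS})$. This fails for $F=\C_U\boxtimes\sho_S$ with $U$ open: tempered versus all holomorphic functions on $U\times S$. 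Theorem~\ref{Thmain} compares these two objects only after applying $\Rhom_{\DXS}(\shm,\cdot)$ with $\shm$ regular holonomic.
\end{itemize}
Correctly assembled, Steps~1--2 only \emph{construct} the canonical morphism $\Df_*\RH^S(F)\to\RH^S(Rf_*F)$. You would use the adjoint of the integration morphism (a map into $f^!\sho^{t,S}_{Y\times S}[d_Y]$), the true adjunction, and $Rf_{!!}F\simeq Rf_*F$. That this morphism is invertible is the analytic content of the theorem, and no formal manipulation supplies it.

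The missing idea is the one in your last paragraph, but it must carry the whole proof rather than repair a commutation.
\begin{itemize}
\item Define the morphism as above, natural in $F$ and compatible with triangles.
\item The question is local on $Y$ and on $S$, so properness leaves only finitely many strata of a triangulation adapted to $F$. Over a contractible stratum $X_\alpha$ and a small Stein $V\subset S$ one has $\shh^jF|_{X_\alpha\times V}\simeq p^{-1}\mathcal{G}$, with $\mathcal{G}$ admitting a finite free resolution.
\item The five lemma then reduces you to $F=\C_Z\boxtimes\sho_S$, with $Z$ locally closed subanalytic and $f$ proper on $\bar Z$.
\item For such $F$ you have $\RH^S(F)[-d_X]\simeq\tho(\C_{Z\times S},\sho_{\XS})$; this is the identification used in the proof of Corollary~\ref{Cormain}. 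You also have $Rf_*F\simeq(Rf_*\C_Z)\boxtimes\sho_S$, and the relative direct image by $f\times\mathrm{id}_S$ of a complex of $\shd_{\XS}$-modules agrees with the absolute one.
\item The claim thus becomes Kashiwara's absolute theorem (the case $d_S=0$) for $f\times\mathrm{id}_S$ and $\C_{Z\times S}$. You still need to check that his isomorphism coincides with your morphism under these identifications.
\end{itemize}
In particular, the $\sho_S$-coherence along $\{x\}\times S$ enters only through the finite free resolutions, not through any commutation of $\rho_S^{-1}$ with $\Rhom$.
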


The following result is an adaptation of the proof of (5.16) of \cite{KS4} using Theorem 5.11 of \cite{FMFS2}:
\begin{lemma}\label{L1}
Let $i: Y\to X$ be the embedding of a closed submanifold. Then we have an isomorphism of functors on $\rD^\rb_{\R-c}(\pOS)$:
$$\RH^S(i^{-1}F)[-d_Y]\overset{\simeq}{\to}\Di^*\RH^S(F)[-d_X]$$
\end{lemma}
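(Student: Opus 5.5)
We follow the classical argument for \cite[(5.16)]{KS4} in the non-relative case. The idea is to deduce the isomorphism from Theorem \ref{Theorem 5.11} applied to $i$, combined with adjunction and the relative Kashiwara equivalence for modules supported on $Y\times S$.

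First we construct a natural morphism. Let $F\in\rD^\rb_\Rc(\pOS)$ and consider the adjunction morphism $F\to Ri_*i^{-1}F=i_*i^{-1}F$; note that $i$ is proper. Since $\RH^S$ is contravariant, this gives $\RH^S(i_*i^{-1}F)\to\RH^S(F)$. By Theorem \ref{Theorem 5.11}, $\RH^S(i_*i^{-1}F)\simeq\Di_*\RH^S(i^{-1}F)$, so we obtain a morphism
\[\Di_*\RH^S(i^{-1}F)\to\RH^S(F).\]
Write $\Di_*$ for the relative direct image, $\Di_*\shm=i_*(\shd_{X\leftarrow Y}\otimes^L_{\shd_{Y\times S/S}}\shm)$. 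The pair $(\Di_*,\Di^!)$ is an adjoint pair, with $\Di^!=\Di^*[d_Y-d_X]$. By adjunction this yields
\[\RH^S(i^{-1}F)\to \Di^!\RH^S(F)=\Di^*\RH^S(F)[d_Y-d_X].\]
Shifting by $[-d_Y]$ gives the desired morphism. Naturality in $F$ follows from the compatibility statement in Theorem \ref{Theorem 5.11}. One should check that this adjunction holds in the relative setting; it does, since $\shd_{X\leftarrow Y}$ is relatively defined and the usual proof is local and goes through for $\DXS$.

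Next we reduce to the case where $F$ is supported on $Y\times S$. Take the distinguished triangle $j_!j^{-1}F\to F\to i_*i^{-1}F\to^{+1}$, where $j:(X\setminus Y)\times S\hookrightarrow X\times S$. Applying $\RH^S$ and then $\Di^*$, it suffices to prove that
\[\Di^*\RH^S(j_!j^{-1}F)=0,\]
and that the morphism is an isomorphism for $F=i_*G$. The second point follows from Theorem \ref{Theorem 5.11} together with the relative Kashiwara equivalence $\Di^!\Di_*\simeq\Id$. For $\Di^*$ this is only available in its derived form; since $\Di^*$ is local, it can be checked in local coordinates with $Y=\{x_1=\dots=x_k=0\}$, reducing by induction to codimension one.

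The main obstacle is the vanishing
\[\Di^*\RH^S(j_!j^{-1}F)\simeq 0.\]
By definition $\RH^S(j_!j^{-1}F)=\rho_S^{-1}\Rhom(R\rho_{S*}j_!j^{-1}F,\sho^{t,S}_{\XS})[d_X]$, which consists of tempered sections on $X\setminus Y$. In codimension one, locally $Y=\{t=0\}$, so $\Di^*$ is the cone of multiplication by $t$. We therefore have to show that $t$ acts bijectively on this complex, i.e.\ that tempered holomorphic functions on subanalytic open sets avoiding $\{t=0\}$ are invertibly divisible by $t$. This holds because $1/t$ has polynomial growth there.

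To make this rigorous I would use the Nakayama variation (item (6)) together with the compatibility of $\sho^{t,S}$ with $Li^*_s$ established in \cite{MFP2}. This reduces the statement fibrewise to the absolute case $d_S=0$, where it is exactly \cite[(5.16)]{KS4}. The reduction is legitimate because both sides are holonomic, hence the Nakayama variation applies to the cone of the morphism.
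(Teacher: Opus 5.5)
Your skeleton is a reasonable reading of the paper's one-line indication, ``adapt the proof of \cite[(5.16)]{KS4} using Theorem~\ref{Theorem 5.11}''. You build the morphism from Theorem~\ref{Theorem 5.11} applied to $F\to i_*i^{-1}F$ plus the $(\Di_*,\Di^!)$ adjunction. Note that this adjunction needs care, since $\RH^S(i^{-1}F)$ is not coherent; for a closed embedding it does hold for arbitrary complexes, as the Koszul complex of the equations of $Y$ shows. You then use the triangle $j_!j^{-1}F\to F\to i_*i^{-1}F\overset{+1}{\to}$ and relative Kashiwara equivalence, and you correctly isolate the key point $\Di^*\RH^S(j_!j^{-1}F)=0$.

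\textbf{The gap} is in the argument you offer to make this vanishing rigorous. The Nakayama variation is stated for objects of $\rD^{\rb}_{\hol}(\DXS)$, and your claim that ``both sides are holonomic'' is false. For $F$ only $S$-$\R$-constructible, $\RH^S(F)$ is in general not even $\DXS$-coherent. For instance, $F=(\pOS)_{U\times S}$ with $U$ an open ball gives $\tho(\C_{U\times S},\sho_{\XS})[d_X]$, and $F=(\pOS)_{M\times S}$ gives the relative distributions of Lemma~\ref{Ldisthyp}. Without coherence the Nakayama conclusion simply fails: $\sho_{\XS}\otimes_{\pOS}p^{-1}\mathcal{K}$, with $\mathcal{K}$ the sheaf of meromorphic functions on $S$, has $Li^*_s=0$ for every $s$ but is nonzero. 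Moreover, a fibrewise compatibility $Li^*_s\RH^S(F)\simeq\RH(Li^*_sF)$ for $S$-$\R$-constructible $F$ is exactly the kind of base-change property the introduction says is unavailable, at least for $d_S>1$, while the lemma is used for arbitrary $d_S$. So the reduction to the absolute case cannot be carried out.

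\textbf{The fix} is to prove the vanishing directly; your heuristic already contains the right idea.
\begin{itemize}
\item Reduce to $Y=\{t=0\}$ of codimension one. Then $j_!j^{-1}F=F\otimes\C_{(X\setminus Y)\times S}$, and the localization isomorphism $\RH^S(F\otimes\C_{(X\setminus Y)\times S})\simeq\RH^S(F)(\ast Y\times S)$, which the paper itself invokes in the proof of Theorem~\ref{Thmain}, applies. Since $t$ acts invertibly on any $\sho_{\XS}(\ast Y\times S)$-module, its cone, hence $\Di^*$, vanishes.
\item Your estimate is what underlies that isomorphism. On a subanalytic open $V$ with $V\cap Y=\emptyset$, one has locally $|t|\geq c\,\mathrm{dist}(\cdot,X\setminus V)$. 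Hence $1/t$ is a tempered $C^\infty$ function there, and multiplication by $t$ is invertible on $\sho^{t,S}_{\XS}$ restricted to subanalytic opens contained in $X\setminus Y$.
\item Your text treats the vanishing only in codimension one, so you must also handle higher codimension. Either reduce the whole lemma to codimension one by factoring $i$ locally into hypersurface embeddings (the morphisms compose thanks to the compatibility of Theorem~\ref{Theorem 5.11} with composition). Or prove the vanishing in codimension $k$ via the Koszul complex of $x_1,\dots,x_k$ on the finite cover $\{x_m\neq0\}$ of $X\setminus Y$.
\end{itemize}
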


We recall that $p^{-1}$ commutes with $R\pi_*$ as functors on $\rD^\rb(\sho_S)$.

 \section{Main result}
 The following result is the relative variant of \cite[Cor. 8.6]{Ka3}:
 \begin{theorem}\label{Thmain}
 Let us assume that $d_S=1$. Let $\shm\in\rD^\rb_{\rhol}(\DXS)$ and let $F\in \rD^\rb_{\Rc}(\pOS)$. Then we have a natural isomorphism:
 \begin{equation}\label{E1}\Rhom_{\DXS}(\shm, \RH^S(F)[-d_X])\to \Rhom_{\DXS}(\shm, \Rhom_{\pOS}(F, \sho_{X\times S}))
 \end{equation}
 \end{theorem}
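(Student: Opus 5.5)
The plan is to follow the outline given in the introduction. The problem is local on $X\times S$. Both sides of \eqref{E1} are triangulated functors in $\shm$, so by dévissage on the cohomology of $\shm$ it is enough to treat a single regular holonomic $\DXS$-module $\shm$. For fixed $\shm$, both sides are also triangulated in $F$. The morphism \eqref{E1} is the one induced by \eqref{E:t} after applying $\rho_S^{-1}\Rhom(R\rho_{S*}F,\cdot)$. I would then argue by induction on $\dim\Supp_X\shm$, treating the case $\dim\Supp_X\shm=0$ as part of the same scheme.

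\textbf{Torsion part.} Since $d_S=1$, the $\sho_S$-torsion submodule $\shm_t\subset\shm$ is again regular holonomic (by the subquotient stability), and so is $\shm/\shm_t$, which is strict. Using the exact sequence $0\to\shm_t\to\shm\to\shm/\shm_t\to0$, it suffices to treat the two cases separately.

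Locally, $\shm_t$ is killed by a power of a local coordinate $t-s_0$ on $S$, so it has a finite filtration whose graded pieces are killed by $t-s_0$. Such a piece is $i_{s_0*}N$ with $N$ a regular holonomic $\shd_X$-module. Then
$$\Rhom_{\DXS}(i_{s_0*}N,\cdot)\simeq\Rhom_{\shd_X}(N,Li^!_{s_0}\cdot),$$
and $Li^!_{s_0}$ is computed by the cone of $t-s_0$, i.e.\ up to shift by $Li^*_{s_0}$. I would then check two compatibilities. First, $Li^*_{s_0}\Rhom_{\pOS}(F,\sho_{\XS})\simeq\Rhom(Li^*_{s_0}F,\sho_X)$. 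Second, $Li^*_{s_0}\RH^S(F)\simeq\RH(Li^*_{s_0}F)$, where $Li^*_{s_0}F$ is $\R$-constructible. The second is the base-change statement for $\RH^S$, available for $d_S=1$ from \cite{FMFS1}. If it is not directly available, I would deduce it from Theorem \ref{Theorem 5.11} and Lemma \ref{L1}, viewing $\{s_0\}\hookrightarrow S$ through the graph. Once both hold, the torsion case reduces to Kashiwara's \cite[Cor.~8.6]{Ka3}.

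\textbf{Strict part.} Now let $\shm$ be strict. Away from a closed analytic set $Z\subset X$ with $\dim Z<\dim\Supp_X\shm$, the module $\shm$ is supported on a submanifold $Y\times S$. By Kashiwara's equivalence (relative version) and Lemma \ref{L1}, we may replace $X$ by $Y$ there. Shrinking $Y$ so that $\Char\shm\subset T^*_YY\times S$, $\shm$ becomes a $\DXS$-module that is $\sho_{\XS}$-coherent and strict, hence locally free over $\sho$ because $d_S=1$. Regularity along fibers says it is a relative flat connection. Over a simply connected open set it is then isomorphic to $\sho_{\XS}\otimes_{\pOS}p^{-1}G$ with $G$ locally free; this follows by solving the relative integrable system, which the relative Frobenius theorem allows. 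For $\shm=\sho_{\XS}$, the two sides of \eqref{E1} reduce to $\RH^S(F)[-d_X]$ and $\Rhom_{\pOS}(F,\sho_{\XS})$ placed in de Rham form. Their comparison is the relative analogue of the fact that $\sho^t$ and $\sho$ have the same de Rham complex $\C$, namely that both relative de Rham complexes equal $\pOS$. I expect this to follow from the relative Poincaré/Dolbeault lemma for $\sho^{t,S}$ from \cite{MFP2}.

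\textbf{Gluing and the main obstacle.} To pass from the open set to all of $\Supp_X\shm$, I would use the exact sequence relating $\shm$ to its localization $\shm(*Z)$ along $Z$ and to $R\Gamma_{[Z]}\shm$. The module $R\Gamma_{[Z]}\shm$ is supported over $Z$ and is handled by induction. The hardest step is the localization $\shm(*Z)$: one has to show that both sides of \eqref{E1} are compatible with $(*Z)$ in the relative setting, i.e.\ that
$$\Rhom_{\DXS}(\shm(*Z),\RH^S(F))\simeq\Rhom_{\DXS}\bigl(\shm,\RH^S(F\otimes\C_{X\setminus Z})\bigr),$$
and similarly for $\sho$. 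For this I would try a resolution of $Z$, so that $Z$ becomes a normal crossing divisor. Theorem \ref{Theorem 5.11} then allows pushing forward along the proper map, and in normal crossing coordinates the comparison can be checked directly, reducing to the local-system case above. The direct check there is an explicit relative Deligne-type computation, which is where regularity enters.
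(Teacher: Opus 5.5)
Your overall skeleton matches the paper's proof: dévissage to a single module, torsion/strict split, induction on $\dim\Supp_X\shm$, localization along a hypersurface, then a resolution combined with Theorem \ref{Theorem 5.11}. Your torsion step is essentially the paper's, which quotes Theorem 3.1 and the proof of Theorem 3.2 of \cite{FMFS1} for the base change in $s_0$. Your fallback through Theorem \ref{Theorem 5.11} and Lemma \ref{L1} would not work, though. Both concern only maps $f\times\mathrm{id}_S$ in the $X$-direction, and base change in $S$ for $S$-$\R$-constructible $F$ is exactly the delicate point.

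\textbf{The localization identity is false.} The genuine gap is in the localization step, which carries all the content of the strict case. The identity you propose, $\Rhom_{\DXS}(\shm(*Z),\RH^S(F))\simeq\Rhom_{\DXS}(\shm,\RH^S(F\otimes\C_{X\setminus Z}))$, is false, and so is its $\sho$-analogue. Take the absolute case $X=\C$, $Z=\{0\}$, $j\colon X\setminus Z\hookrightarrow X$, $\shm=\sho_X$, $F=\C_X$. Then $\RH(\C_X)[-1]=\sho_X$ and $\RH(\C_{X\setminus Z})[-1]=\sho_X(*Z)$. The left side is $\Rhom_{\shd_X}(\sho_X(*Z),\sho_X)[1]\simeq j_!\C_{X\setminus Z}[1]$, since $\sho_X(*Z)\simeq\shd_X/\shd_X\partial_x x$. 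The right side is $\Rhom_{\shd_X}(\sho_X,\sho_X(*Z))[1]\simeq Rj_*\C_{X\setminus Z}[1]$. The stalks at $0$ differ. In the $\Rhom$ form, localizing $\shm$ does not correspond to $F\mapsto F\otimes\C_{X\setminus Z}$; in this example it corresponds to $F\mapsto Rj_*j^{-1}F$.

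This is why the paper passes to the equivalent statement (b): $\shm\otimes^L_{\DXS}\RH^S(F)$ is $S$-$\R$-constructible for right modules $\shm$. In that form:
\begin{itemize}
\item $\shm\otimes^L_{\DXS}\RH^S(F_{(X\setminus \Supp_X\shm)\times S})=0$, so one may assume $\supp F\subset \Supp_X\shm\times S$.
\item Choose a hypersurface $Y$ with $\dim(Y\cap\Supp_X\shm)<k$, and treat $R\Gamma_{[Y\times S]}\shm$ by induction. For $\shm\simeq\shm(*Y\times S)$ one may replace $\RH^S(F)$ by $\RH^S(F)(*Y\times S)\simeq\RH^S(F\otimes\C_{(X\setminus Y)\times S})$, i.e. assume $F\simeq F\otimes\C_{(X\setminus Y)\times S}$.
\end{itemize}

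\textbf{The resolution step is not argued.} Even with a correct localization, ``resolve, push forward with Theorem \ref{Theorem 5.11}, check directly'' omits what makes it work.
\begin{itemize}
\item Theorem \ref{Theorem 5.11} applies only once $F\simeq Rf_*f^{-1}F$. The two reductions above give exactly this for a projective $f\colon X'\to X$ with $\dim X'=k$, biholomorphic from $X'\setminus Y'$ onto the $k$-dimensional smooth part of $\Supp_X\shm$, where $Y'$ is a normal crossing divisor.
\item One then needs, from the proof of Prop.~4.27 in \cite{FMFS2}: $\shm'=\Df^*\shm[\delta]$ is a single strict module of D-type along $Y'$; $\shm'\simeq f^{-1}\shm\otimes_{f^{-1}\DXS(*Y\times S)}\DXpS(*Y'\times S)$; and $\Df_*\RH^S(f^{-1}F)\simeq Rf_*\RH^S(f^{-1}F)$.
\item The D-type case itself is Lemma 2.14 of \cite{FMFS1}, adapted to right modules.
\item To descend from $X'$ to $X$, use the projection formula and the stability of $S$-$\R$-constructibility under proper direct image.
\end{itemize}
The D-type case does not reduce to your $\sho$-coherent computation on the smooth locus. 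The discrepancy between tempered and arbitrary holomorphic solutions lives on $Y'$, and that is exactly where regularity (moderate growth) must enter. Your proposal names this step but does not supply it.
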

 \begin{corollary}\label{Cormain}
 
Let $\shm\in\rD^\rb_{\rhol}(\DXS)$. Then the natural morphism
\begin{equation}\label{E1}\Rhom_{\rho_{S!}\DXS}(\rho_{S!}\shm, \sho^{t,S}_{\XS})\to 
\Rhom_{\rho_{S!}\DXS}(\rho_{S!}\shm, R\rho_{S*}\sho_{\XS})
\end{equation}
is an isomorphism.
\end{corollary}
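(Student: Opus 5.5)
The corollary should follow from Theorem \ref{Thmain} by specializing to $F=\pOS$ and then translating the statement on $X_{sa}\times S$ back to $X\times S$ through the adjunctions for $\rho_S$.

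\textbf{Specializing the theorem.} For $F=\pOS$ we have
\[
R\rho_{S*}\pOS\simeq\rho_{S*}\pOS ,
\]
so $\Rhom_{\rho_{S*}\pOS}(R\rho_{S*}\pOS,\sho^{t,S}_{\XS})\simeq \sho^{t,S}_{\XS}$. Therefore
\[
\RH^S(\pOS)[-d_X]\simeq \rho_S^{-1}\sho^{t,S}_{\XS}.
\]
On the other side, $\Rhom_{\pOS}(\pOS,\sho_{\XS})=\sho_{\XS}$. Since $d_S=1$, Theorem \ref{Thmain} gives an isomorphism
\[
\Rhom_{\DXS}(\shm,\rho_S^{-1}\sho^{t,S}_{\XS})\to \Rhom_{\DXS}(\shm,\sho_{\XS}).
\]
One must check that this map is $\rho_S^{-1}$ applied to the morphism \eqref{E:t}, composed with $\rho_S^{-1}R\rho_{S*}\sho_{\XS}\simeq\sho_{\XS}$. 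This should be immediate from the definition of the natural morphism in the theorem.

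\textbf{Transferring to $X_{sa}\times S$.} Since $\rho_{S!}$ is exact and left adjoint to $\rho_S^{-1}$ (compatibly with the ring structures $\rho_{S!}\DXS$), for any $G\in\rD^\rb(\rho_{S!}\DXS)$ we have
\[
R\rho_{S*}\Rhom_{\DXS}(\shm,\rho_S^{-1}G)\simeq \Rhom_{\rho_{S!}\DXS}(\rho_{S!}\shm,G).
\]
Equivalently, $\rho_{S!}\dashv\rho_S^{-1}$ gives the global form of this adjunction, and locally on open subanalytic sets we get the sheaf version. I would apply this with $G=\sho^{t,S}_{\XS}$ and with $G=R\rho_{S*}\sho_{\XS}$. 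For the second choice I also use
\[
\Rhom_{\rho_{S!}\DXS}(\rho_{S!}\shm,R\rho_{S*}\sho_{\XS})\simeq R\rho_{S*}\Rhom_{\DXS}(\shm,\sho_{\XS}),
\]
which again follows from $\rho_S^{-1}R\rho_{S*}\simeq\Id$. Naturality of these adjunction isomorphisms identifies the morphism of the corollary with $R\rho_{S*}$ applied to the isomorphism obtained in the first step, so it is an isomorphism.

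\textbf{Main obstacle.} The delicate point is the sheaf-level adjunction $R\rho_{S*}\Rhom(\shm,\rho_S^{-1}G)\simeq\Rhom(\rho_{S!}\shm,G)$ in the relative subanalytic setting. To obtain the derived version one needs $\rho_{S!}$ to send a locally free resolution of $\shm$ to something acyclic enough. I would handle this locally: take a resolution of $\shm$ by finite free $\DXS$-modules, which exists locally since $\shm$ is coherent. Then reduce to $\shm=\DXS$, where both sides equal $G$. Everything else is formal.
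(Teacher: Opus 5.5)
There is a genuine gap, and it sits in the step you call formal. The sheaf-level identity $R\rho_{S*}\Rhom_{\DXS}(\shm,\rho_S^{-1}G)\simeq\Rhom_{\rho_{S!}\DXS}(\rho_{S!}\shm,G)$ is false.

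Already for $\shm=\DXS$ the right side is $G$, while the left side is $R\rho_{S*}\rho_S^{-1}G$. Your claim that both equal $G$ uses $R\rho_{S*}\rho_S^{-1}\simeq\mathrm{id}$, but only $\rho_S^{-1}R\rho_{S*}\simeq\mathrm{id}$ holds. For $G=\sho^{t,S}_{\XS}$ your identity would give $\sho^{t,S}_{\XS}\simeq R\rho_{S*}\sho_{\XS}$. This fails on sections over $U\times V$ with $U$ relatively compact subanalytic: there one compares functions tempered along $\partial U\times V$ with all holomorphic functions. Your argument would also prove the corollary for every coherent $\shm$, regular or not.

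The underlying reason is that localizing the global adjunction $\rho_{S!}\dashv\rho_S^{-1}$ on a subanalytic open $U\times V$ uses the test object $\rho_{S*}\C_{U\times V}$, not $\rho_{S!}\C_{U\times V}$. In general $\rho_S^{-1}\Rhom(\rho_{S*}\C_{U\times V},G)$ is not $\Rhom(\C_{U\times V},\rho_S^{-1}G)$.

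For the same reason your first step carries no information. Theorem \ref{Thmain} with $F=\pOS$ is automatic, because $\rho_S^{-1}$ of the morphism $\sho^{t,S}_{\XS}\to R\rho_{S*}\sho_{\XS}$ is already an isomorphism $\rho_S^{-1}\sho^{t,S}_{\XS}\simeq\sho_{\XS}$. Applying $\rho_S^{-1}$ forgets exactly the growth conditions at boundaries of subanalytic opens, which are the whole content of the corollary. Your worry about resolutions is beside the point: $\rho_{S!}$ is exact and the global derived adjunction is fine.

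What is missing is to test on the generating opens $U\times V$ ($U$ relatively compact subanalytic in $X$, $V$ open in $S$) and to use the theorem for a whole family of $F$'s; this is what the paper does.

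\begin{enumerate}
\item Write $R\Gamma(U\times V;\cdot)=\Rh(\rho_{S*}\C_{U\times V},\cdot)$ and move $\rho_{S*}\C_{U\times V}$ into the second argument. Only then apply the global adjunction.
\item Use $\rho_S^{-1}\Rhom(\rho_{S*}\C_{U\times V},\sho^{t,S}_{\XS})\simeq\Rhom(\C_{X\times V},\tho(\C_{U\times S},\sho_{\XS}))$, and the analogous formula with $\Rhom(\C_{U\times S},\sho_{\XS})$ for $R\rho_{S*}\sho_{\XS}$.
\item The morphism on $U\times V$ then becomes $\Rh(\C_{X\times V},\cdot)$ applied to
\[
\Rhom_{\DXS}(\shm,\tho(\C_{U\times S},\sho_{\XS}))\to\Rhom_{\DXS}(\shm,\Rhom(\C_{U\times S},\sho_{\XS})).
\]
\end{enumerate}

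Since $\tho(\C_{U\times S},\sho_{\XS})\simeq\RH^S(\pOS\otimes\C_{U\times S})[-d_X]$, this is exactly Theorem \ref{Thmain} for $F=\pOS\otimes\C_{U\times S}$, for every such $U$. This is where the regularity of $\shm$ genuinely enters.
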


\begin{proof}
Since the family of open sets of the form $U\times V$, with $U$ subanalytic relatively compact in X, generates the open coverings of $X_{\sa}\times S$, it is sufficient to prove that, for any open subanalytic relatively compact set $U$ in X and any open suset ~$V$ in $S$, morphism \eqref{E:t} induces an isomorphism,
$$R\Gamma(U\times V,\rh_{{\rho_S}_!\DXS}({\rho_S}_!\shm, \sho_{\XS}^{t,S}))$$ $$ \to R\Gamma(U\times V;\rh_{{\rho_S}_!\DXS}({\rho_S}_!\shm,R{\rho_S}_* \sho_{\XS}))$$

We have a chain of isomorphisms

$$R\Gamma(U\times V,\rh_{{\rho_S}_!\DXS}({\rho_S}_!\shm, \sho_{\XS}^{t,S}))$$ 
$$\simeq \Rh({\rho_S}_*\CC_{U\times V},\rh_{{\rho_S}_!\DXS}({\rho_S}_!\shm, \sho_{\XS}^{t,S}))$$
$$\simeq \Rh_{{\rho_S}_!\DXS}({\rho_S}_!\shm, \rh(\CC_{U\times V}, \sho_{\XS}^{t,S}))$$
$$\simeq \Rh_{\DXS}(\shm, {\rho_S}^{-1}\rh({\rho_S}_*\CC_{U\times V},\sho_{\XS}^{t,S}))\quad\text{(by adjunction)}$$
$$\simeq \Rh_{\DXS}(\shm, \rh(\CC_{X\times V}, \tho(\CC_{U \times S},\sho_{\XS})))$$
$$\simeq \Rh(\CC_{X\times V}, \rh_{\DXS}(\shm, \tho(\CC_{U \times S},\sho_{\XS})))$$

Similarly we have the chain of isomorphisms:
\begin{align*}
R\Gamma(U\times V&;\rh_{{\rho_S}_!\DXS}({\rho_S}_!\shm,R{\rho_S}_* \sho_{\XS}))\\
&\simeq \Rh({\rho_S}_*\C_{U\times V},\rh_{{\rho_S}_!\DXS}({\rho_S}_!\shm,R{\rho_S}_*\sho_{\XS}))\\
&\simeq \Rh_{{\rho_S}_!\DXS}({\rho_S}_!\shm,\rh({\rho_S}_*\C_{U\times V},R{\rho_S}_*\sho_{\XS}))\\
&\simeq \Rh_{\DXS}(\shm,{\rho_S}^{-1}\rh({\rho_S}_*\C_{U\times V},R{\rho_S}_*\sho_{\XS}))\\
&\simeq \Rh_{\DXS}(\shm,\rh(\C_{X\times V},\Rhom(\C_{U \times S},\sho_{\XS})))\\
&\simeq\Rh(\C_{X\times V},\rh_{\DXS}(\shm,\rh(\C_{U \times S},\sho_{\XS}))).
\end{align*}

 We have thus reduced the proof to showing that the morphism
\begin{multline*}
\rh_{\DXS}(\shm, \tho(\CC_{U\times S},\sho_{\XS}))\simeq \rh_{\DXS}(\shm, \RH^S(\CC_{U\times S})[-d_X])\\
\to \rh_{\DXS}(\shm, \rh(\CC_{U\times S},\sho_{\XS}))
\end{multline*}
is an isomorphism in $\rD^ \rb(\pOS)$ which follows from Theorem \ref{Thmain}.\end{proof}

\section{Proof of Theorem \ref{Thmain}}

In this proof we shall use the equivalence between left and right $\DXS$-modules, thus viewing a right $\DXS$-module as a left $\DXS^{op}$-module (\cite[1.-4.]{Ka2})

We shall also consider the following statements which are equivalent to the statement of Theorem \ref{Thmain}: 
\begin{enumerate}
\item{Let $\shm\in\rD^\rb_{\rhol}(\DXS^\op)$ and let $F\in \rD^\rb_{\Rc}(\pOS)$. Then we have a natural isomorphism:
$$\shm\otimes^L_{\DXS} \RH^S(F)[-d_X]\overset{\sim}{\to}\shm\otimes^L_{\DXS}\Rhom_{\pOS}(F, \sho_{X\times S})$$
}
 \item{Let $\shm\in\rD^\rb_{\rhol}(\DXS^\op)$ and let $F\in \rD^\rb_{\Rc}(\pOS)$. Then 
$$ \shm\otimes^L_{\DXS} \RH^S(F)$$
 is an object of $\rD^\rb_{\Rc}(\pOS)$.}
 \item{Let $\shm\in\rD^\rb_{\rhol}(\DXS)$ and let $F\in \rD^\rb_{\Rc}(\pOS)$. Then 
$ \Rhom_{\DXS}(\shm, \RH^S(F))$
 is an object of $\rD^\rb_{\Rc}(\pOS)$.}
\end{enumerate}
We start by noticing that, due to the holonomicity of $\shm$, for $F\in \rD^\rb_{\Rc}(\pOS)$ $$\Rhom_{\DXS}(\shm, \Rhom_{\pOS}(F, \sho_{X\times S}))$$ is an object of $\rD^\rb_{\Rc}(\pOS)$.

The equivalences of $(b)$ with $(c)$ and that of Theorem \ref{Thmain} with (a) are trivial. The equivalence of $(a)$ with  $(b)$ follows from \cite[Prop. 1.3]{MFCS2}.

Let $d_{\shm}$ denote the dimension of $\supp_X \shm:=p_X(\supp \shm)$.

1) \textit{We first prove that (c) holds in the case $d_{\shm}=0$ and arbitrary $d_S$}. 
For that purpose we may assume that $\supp_X \shm=\{0\}\subset X$. Then, by Kashiwara's equivalence, $\shm=\Di_*\shm_0$ where $i:\{0\}\times S\to X\times S$ and $\shm_0$ is identified to a coherent $\sho_S$-module.

Therefore, by the adjunction formula \cite[Th. 4.33 (2)]{Ka2} and Lemma \ref{L1}, we have $$\Rhom_{\DXS}(\Di_*\shm_0, \RH^S(F)[-d_X])\simeq Ri_*\Rhom_{\sho_S}(\shm_0, \Di^*\RH^S(F)[-d_X])$$ $$\simeq Ri_*\Rhom_{\sho_S}(\shm_0, \RH^S(i^{-1}F))$$ $$\simeq Ri_*\Rhom_{\sho_S}(\shm_0,\Rhom_{\sho_S}(i^{-1}F, \sho_S))$$ Then (c) holds for $\shm$ because $\Rhom_{\sho_S}(\shm_0, \Rhom_{\sho_S}(i^{-1}F, \sho_S))$ has $\sho_S$-coherent cohomologies.

 2) \textit{We now  prove (b) by induction on $d_{\shm}$}.
 
  Note that the result is of local nature since the morphism is well defined. We set $Z=\supp_X \shm$. 

2.1) \textit{The torsion case:}
By a standard method we may assume also that $\shm$ is a single module and then the result follows for arbitrary $d_{\shm}$ by Theorem 3.1 and the proof of Theorem 3.2  of \cite{FMFS1}.

2.2) \textit{We now assume that (b) holds true for any $\shm'$ such that $d_{\shm'}<k-1$}. 

Let $\shm$ be such that $d_{\shm}=k$.

By a standard argument, we again reduce the proof to the case where $\shm$ is a single module and then to the case where $\shm$ is a strict right regular holonomic $\DXS$-module.

Since the proof is of local nature, we can use the following classical argument:
Let us write $Z=\bigcup\limits_{i\in K} Z_i$ where each $Z_i$ is closed analytic irreducible as described in \cite[Lem. 2.10]{FMF}.
Let $Y\subset X$ be a hypersurface transversal to $Z$ containing the singular locus $Z_{sing}$ of $Z$ and all $Z_i$ such that
$\dim Z_i < k$. 

We can then  find a projective morphism $f:X'\to X$ with $\dim X'=k$, which is biholomorphic from the complement $X'\setminus Y'$ of a normal crossing divisor $Y'$ in $X'$ to the smooth locus of dimension $k$ of $Z$.

By construction $Z\cap Y$ has dimension strictly smaller than $k$. 
Let us consider the distinguished triangle in $\rD^\rb_{\Rc}(p^{-1}\sho_S)$:
$$F_{(X\setminus Z)\times S}\to F\to F_{Z\times S}\overset {+1}{\to}$$
We have $$\RH^S(F_{(X\setminus Z)\times S})\simeq \RH^S(F)[X\times S\setminus Z\times S]$$
By the regular  holonomicity of $\shm$ we may apply Nakayama's Lemma to conclude $R\Gamma_{[Z\times S]}(\shm)\simeq \shm$ by \cite[Prop. 3.32]{Ka2}, thus, by the relative version of \cite[Prop. 3.34]{Ka2} we have $$\shm\otimes^L_{\DXS}\RH^S(F_{(X\setminus Z)\times S})=0$$
Therefore we may assume that $\supp F\subset Z\times S$.
On the other hand, by the induction hypothesis, (b) holds true for the complex $R\Gamma_{[Y\times S]}\shm$ since $\dim (Z\cap Y)<d_{\shm}$, so we are reduced to assume that $\shm\simeq \shm(\ast Y\times S)$. As a consequence we may also assume that $\RH^S(F)\simeq \RH^S(F)(\ast Y\times S)$. 
Since 
$$\RH^S(F)(\ast Y\times S)\simeq \RH^S(F\otimes \C_{(X\setminus Y )\times S})$$ it follows that we may also assume $F\simeq F\otimes \C_{(X\setminus Y )\times S}$.  In other words, we may assume that 
$f^{-1}F = f^{-1}F\otimes \C_{(X' \setminus Y' )\times S}$ and $F\simeq Rf_*f^{-1}F$.  

As explained in \cite[Proof of Prop. 4. 27]{FMFS2}, setting $\delta:=k-\dim X=\dim X'-\dim X$, $\shm':=\Df^ *\shm[\delta] $ is concentrated in degree zero and is of D-type along $Y'$. 
According to Lemma 2.14 in \cite{FMFS1}, Theorem \ref{Thmain} holds true if $\shm$ is a left strict module of $D$-type along a divisor $Y\times S$ and $F\simeq F\otimes \C_{(X\setminus Y)\times S}$. (Remark that a slight adaptation of the proof of the loc.cit Lemma, which consists in using right instead of left $\DXS$-modules, entails that (a) and thus (b) hold true for any right strict $\DXS$-module of $D$-type along a divisor $Y\times S$ and for  any $F$). 

Thus (b) holds true for $\shm'$ and $F':=f^{-1}F$. Therefore we conclude that 
\begin{equation}\label{E100}Rf_*(\shm'\otimes^L_{\DXpS}\RH^S_{X'}(f^{-1}F)[-d_{X'}])
\end{equation}
is an $S$-$\R$-constructible complex.
Still using \cite[Proof of Prop. 4. 27]{FMFS2}, via the translation left/right $\DXS$-modules, we have $$\shm'\simeq f^{-1}\shm\otimes_{f^{-1}\DXS(\ast Y\times S)}\DXpS(\ast Y'\times S)$$ thus 
$$Rf_*(\shm'\otimes^L_{\DXpS}\RH^S_{X'}(f^{-1}F)[-d_{X'}])$$ $$\simeq Rf_*(f^{-1}\shm\otimes^L_{f^{-1}\DXS}\RH^S_{X'}(f^{-1}F)[-d_{X'}])$$ 

Therefore, by the stability of $S$-$\R$-constructibility under proper direct image and the projection formula, \eqref{E100} implies that \begin{equation}\label{E101}\shm\otimes^L_{\DXS}Rf_*\RH^S_{X}(f^{-1}F)[-d_{X'}]
\end{equation} 
is an $S$-$\R$-constructible complex.

According to \cite[Th. 5.11]{FMFS2}, we have a natural isomorphism $\Df_*\RH^S_{X'}(f^{-1}F)\simeq \RH_X^S(Rf_*f^{-1}F)\simeq \RH^S(F)$. 

On the other hand, again by \cite[Proof of Prop. 4. 27]{FMFS2}
we also have
$$\Df_*\RH^S_{X'}(f^{-1}F)\simeq Rf_*\RH^S_{X'}(f^{-1}F)$$ 

Composing with \eqref{E101} ends the proof of b).

\section{Application}

For a complex manifold $Z$, $Z_{\R}$ stands for the underlying real analytic manifold and we abusively note $\Db_Z$ instead of $\Db_{Z_{\R}}$. We also note $\bar{Z}$ the complex conjugate of $Z$ so that we regard $Z\times \bar{Z}$ as a complexification of $Z_{\R}$ by the diagonal embedding. 

If $M$ is a real analytic manifold of dimension $n$ and $X$ is a complexification of $M$, we note $\Db_{M\times S/S}$ (resp. $B_{M\times S/S}$) the sheaf of distributions on $M\times S_{\R}$ (resp. the sheaf of hyperfunctions on $M\times S_{\R}$) which are holomorphic on $S$. 
We note $i: M\to X$ the real analytic embedding, notation we keep for the embedding $M\times S\to X\times S$. We also note $\Delta_S$ the diagonal in $S\times\bar{S}$. In a similar way we note $j: M\times \Delta_S\to X\times S\times\bar{S}$ the diagonal embedding associated to $S$. We also consider the projections $r: X\times S\times \bar{S}\to X\times S $ and $p:X\times S\times \bar{S}\to \bar{S}$.

\begin{lemma}\label{Ldisthyp}
Once fixed an orientation on $M$, we have isomorphisms of sheaves of $\DXS$-modules:
\begin{enumerate}
\item{$\Db_{M\times S/S}\simeq \tho(\C_{M\times S}, \sho_{\XS})[d_X]$.}
\item{$B_{M\times S/S}\simeq \Rhom(\C_{M\times S}, \sho_{\XS})[d_X]$.}
\end{enumerate}
\end{lemma}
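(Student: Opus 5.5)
Both isomorphisms will be deduced from their absolute counterparts on $X\times S\times\bar S$, taken as a complexification of $M\times S_\R$, by solving the $\bar\partial_{\bar S}$-system that expresses holomorphy in the $S$-variable. In the absolute case (Kashiwara--Schapira), for the real manifold $N=M\times S_\R$ with complexification $W=X\times S\times\bar S$ of complex dimension $n+2d_S$, one has
\[
\Db_N\simeq \tho(\C_N,\sho_W)[n+2d_S],\qquad B_N\simeq \Rhom(\C_N,\sho_W)[n+2d_S],
\]
where $N$ is embedded in $W$ by $j$ and the second isomorphism is Sato's definition (it uses the chosen orientation of $M$, and $S_\R$ is canonically oriented). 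Holomorphy in $S$ means solving $\bar\partial_{\bar S}$, i.e.\ the $\shd_{\bar S}$-module $\sho_{\bar S}$ written on the $\bar S$ factor. I would therefore write
\[
\Db_{M\times S/S}\simeq \Rhom_{r^{-1}\shd_{\bar S}}(\sho_{\bar S},\Db_{M\times S_\R}),
\]
and prove that this complex is concentrated in degree $0$ by the Dolbeault--Grothendieck lemma with distribution (resp.\ hyperfunction) coefficients along the $S$ variables, $M$ being a parameter. The analogous formula holds for $B_{M\times S/S}$.

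Next I would substitute the absolute formulas and commute $\Rhom_{\shd_{\bar S}}(\sho_{\bar S},\cdot)$ with $\tho(\C_N,\cdot)$ (resp.\ $\Rhom(\C_N,\cdot)$). This yields $\tho(\C_N,\Rhom_{\shd_{\bar S}}(\sho_{\bar S},\sho_W))$. By Cauchy--Kowalevski, $\Rhom_{\shd_{\bar S}}(\sho_{\bar S},\sho_W)\simeq r^{-1}\sho_{\XS}$, that is, holomorphic functions constant in $\bar S$ (here I use the de Rham shift convention, so the complex is concentrated in degree $d_S$, giving a shift $[-d_S]$). Then I compute $\tho(\C_N, r^{-1}\sho_{\XS})$. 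The fibres of $r$ restricted to $N\to M\times S$ are points, since $N\simeq M\times\Delta_S\to M\times S$ is a homeomorphism; however, $N$ has real codimension $2d_S$ inside $r^{-1}(M\times S)=M\times S\times\bar S$. Using $\tho(\C_N,r^{-1}G)\simeq \tho(\C_{M\times S},G)\otimes\tho(\C_{\Delta},\C)$-type base change for the smooth map $r$, this produces $\tho(\C_{M\times S},\sho_{\XS})$ up to a shift $[-2d_S]$, which is the orientation sheaf of $\Delta_S\subset S\times\bar S$ (trivial). The net shift is then $n+2d_S-d_S-d_S\cdot$ (bookkeeping), and it should come out to $[d_X]=[n]$. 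I will check the signs of the shifts carefully, and part (2) follows identically with $\Rhom$ in place of $\tho$.

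The main obstacle is the base-change step for $\tho$ along $r$. Tempered cohomology does not commute with $r^{-1}$ in general. I would therefore argue instead via the functorial property $\tho(\C_N,\sho_W)$ viewed as $\RH$ of the real-constructible sheaf $\C_N$, using the isomorphism $\Dfrac$-pushforward $\Rhom_{\shd_{\bar S}}(\sho_{\bar S},\cdot)$ as a partial de Rham (i.e.\ $D$-module direct image along the $\bar S$ factor, which is proper on $\supp\C_N$ because $N\to M\times S$ is a homeomorphism). Kashiwara--Schapira's commutation of $\tho(\cdot,\sho)$ with proper direct images then gives $Rr_*\tho(\C_N,\sho_W)$ as a de Rham complex isomorphic to $\tho(Rr_*\C_N,\sho_{\XS})$ shifted, and $Rr_*\C_N\simeq\C_{M\times S}$. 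The remaining work is verifying that the resulting identification is $\DXS$-linear, which holds since every step is $\shd_{X}$- and $\sho_S$-linear.
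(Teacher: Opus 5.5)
Your proof is correct. For (2) it is the paper's argument. For (1) you take a genuinely different route.

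For (2), as in the paper, you apply $\Rhom_{p^{-1}\shd_{\bar S}}(p^{-1}\sho_{\bar S},\cdot)$ to Sato's formula on $W=X\times S\times\bar S$ and commute it past $\Rhom(\C_N,\cdot)$. You then use the relative Poincar\'e lemma $\Rhom_{p^{-1}\shd_{\bar S}}(p^{-1}\sho_{\bar S},\sho_W)\simeq r^{-1}\sho_{X\times S}$, and finish with the base change coming from the square $r\circ j=i\circ r'$.

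For (1), the paper does not give up the base change. It reruns the same computation with the subanalytic sheaves $\sho^t_W$ and $\sho^t_{X\times S}$, where $r^{-1}$ and $j^!$ make sense. The only new input is the tempered relative Poincar\'e lemma \cite[Lem.~7.4.8]{KS6}. So the obstruction you anticipated (tempered cohomology not commuting with $r^{-1}$) disappears once one works with $\sho^t$ rather than $\tho$.

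You instead read the partial de Rham complex along $\bar S$ as computing the $\shd$-module direct image by the smooth map $r$, namely $\int_r=Rr_*\,\mathrm{DR}_{W/X\times S}(\cdot)[d_S]$. You then apply the Kashiwara--Schapira commutation of $\tho(\cdot,\sho)$ with direct images proper on the support (the case $d_S=0$ of \cite[Th.~5.11]{FMFS2}). This applies because $r|_N$ is a homeomorphism onto the closed set $M\times S$, so $r$ is proper on $N$ and $Rr_*\C_N\simeq\C_{M\times S}$.

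Both routes work, and each buys something different:
\begin{itemize}
\item Yours stays within the sheaf-level framework of \cite{KS4}, with no ind-sheaves or subanalytic sites. The $\shd_{X\times S}$-structure is carried by $\int_r$. The same formal argument with $\Rhom$ also gives (2), via $r^!\simeq r^{-1}[2d_S]$ and $Rr_*\Rhom(G,r^!\,\cdot)\simeq\Rhom(Rr_!G,\cdot)$.
\item The paper's is shorter and treats (1) and (2) in parallel.
\item You also add the Dolbeault--Grothendieck check that the $\bar\partial_s$-complex with distribution (resp.\ hyperfunction) coefficients is concentrated in degree $0$. The paper leaves this implicit, but it is needed to identify the complex with the sheaf $\Db_{M\times S/S}$ (resp.\ $B_{M\times S/S}$).
\end{itemize}

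Your shift bookkeeping needs cleaning up.
\begin{itemize}
\item $\Rhom_{p^{-1}\shd_{\bar S}}(p^{-1}\sho_{\bar S},\sho_W)$ is $r^{-1}\sho_{X\times S}$ in degree $0$, with no $[-d_S]$.
\item In the base-change version, the whole drop from $[d_X+2d_S]$ to $[d_X]$ comes from $j^!r^{-1}\simeq r'^{-1}i^![-2d_S]$, since $r$ has real fibre dimension $2d_S$.
\item In your direct-image version, one has $\int_r\tho(\C_N,\sho_W)[d_W]\simeq\tho(\C_{M\times S},\sho_{X\times S})[d_{X\times S}]$. The $[d_S]$ in $\int_r$ cancels against $d_W-d_{X\times S}=d_S$, so you again land on $[d_X]$.
\end{itemize}
Also, $\shd_{\bar S}$ should be pulled back by the projection $W\to\bar S$, not by $r$. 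The ``$\tho(\C_\Delta,\C)$'' formula should simply be dropped.
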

\begin{proof} 
Let us consider the following commutative square of functors:
\begin{equation}\label{E300}
\xymatrix{
X\times S\times \bar{S}\ar[r]^r & X\times S\\
M\times \Delta_S\ar[u]^j\ar[r]^{r'}_{\simeq} &M\times S\ar[u]^i\\
}
\end{equation}

Therefore we have $j^! \circ r^{!}=r'^{!}\circ i^!$ and thus $$j^! r^{-1}[-d_S]=r'^{-1}j^!$$
We start by proving (b).

We have $B_{M\times S}\simeq \Rhom(\C_{M\times S}, \sho_{X\times S\times \bar{S}})[d_X+2d_S])|_{M\times S}$

Hence, identifying $S$ and $\Delta_S$ by the first projection ($r'$), we have $$\Rhom_{p^{-1}\shd_{\bar{S}}}(p^{-1}\sho_{\bar{S}}, \Rhom(\C_{M\times S}, \sho_{X\times S\times \bar{S}})[d_X+2d_S])|_{M\times S}$$
$$\simeq \Rhom (\C_{M\times S}, \Rhom_{p^{-1}\shd_{\bar{S}}}(p^{-1}\sho_{\bar{S}},  \sho_{X\times S\times \bar{S}})[d_X+2d_S])|_{M\times S}$$
$$\simeq \Rhom (\C_{M\times S}, r^{-1}\sho_{X\times S}[-d_S][d_X+d_S])|_{M\times S}$$
$$\simeq \Rhom(\C_{M\times S}, \sho_{X\times S})[d_X]$$ which ends the proof of $(b)$.

The proof of (a) is similar using the tools of \cite{KS4}.

According to \cite[Th. 5.10 (2.4)]{KS4} we have $$\Db_{M\times S}\simeq \tho(\C_{M\times S}, \sho_{X\times S\times \bar{S}}[d_X+2 d_S])|_{M\times S}$$

Then, in the argument above, we can replace $\tho$ by $\Rhom$, the sheaf $\sho_{X\times S\times \bar{S}}$ by the subanalytic sheaf $\sho^t_{X\times S\times \bar{S}}$ and the sheaf 
$\sho_{X\times S}$ by the subanalytic sheaf $\sho^t_{X\times S}$, pursuing the same argument because  we have $$\Rhom_{p^{-1}\shd_{\bar{S}}}(p^{-1}\sho_{\bar{S}}, \sho^t_{X\times S\times \bar{S}}))|_{M\times S}$$ $$\simeq \Rhom_{\shd_{X\times S\times\bar{S}}}(\shd_{X\times S\times\bar{S}\to \bar{S}}, \sho^t_{X\times S\times \bar{S}}))|_{M\times S}$$ $$\simeq r^{-1}\sho^t_{X\times S}[-d_S]$$ according to \cite[Lem. 7.4.8]{KS6}.

\end{proof}

Therefore, if $\shm$ is an object of $\rD^\rb_{\rhol}(\DXS)$ and $d_S=1$, Theorem \ref{Thmain} shows that we have an isomorphism 

$$ \Rhom_{\DXS}(\shm, \Db_{M\times S/S})\simeq \Rhom_{\DXS}(\shm,
B_{M\times S/S})$$

\end{document}